\numberwithin{equation}{section}
\newtheorem{theorem}{Theorem}[section]
\newtheorem{proposition}[theorem]{Proposition}
\newtheorem{corollary}[theorem]{Corollary}
\theoremstyle{definition}
\newtheorem{definition}[theorem]{Definition}
\newtheorem{remark}[theorem]{Remark}
\newtheorem*{assumption}{Standing assumptions}
\newenvironment{problem}[1]
  {\innercustomthm}
  {\endinnercustomthm}
\def\E{{\mathbb E}}
\def\R{{\mathbb R}}
\def\P{{\mathcal P}}
\def\B{{\mathcal B}}
\def\X{{\mathcal X}}
\def\L{{\mathcal L}}
\def\W{{\mathcal W}}
\def\F{{\mathcal F}}
\def\C{{\mathcal C}}
\title{Translation invariant mean field games with common noise}
\author{Daniel Lacker and Kevin Webster}
\begin{document}
\pagestyle{empty}

\begin{abstract}
This note highlights a special class of mean field games in which the coefficients satisfy a convolution-type structural condition. A mean field game of this type with common noise is related to a certain mean field game without common noise by a simple transformation, which permits a tractable construction of a solution of the problem with common noise from a solution of the problem without.
\end{abstract}

\maketitle

\section{Introduction}
The goal of this paper is to demonstrate how a typical structural property can be exploited to construct a solution of a mean field game (MFG) \emph{with common noise} from a solution of a certain MFG \emph{without common noise}. This provides a simple way to extend to the common noise setting many existing results on MFGs without common noise. The MFG with common noise we consider is described concisely as follows:

\begin{problem}{with common noise} \label{mfgcn}
\[
\begin{cases}
&\alpha^* \in \arg\max_\alpha\E\left[\int_0^Tf(t,X^\alpha_t,\mu_t,\alpha_t)dt + g(X^\alpha_T,\mu_T)\right], \\
&dX^\alpha_t = \left[b_0(t,\mu_t) + b(t,X^\alpha_t,\mu_t,\alpha_t)\right]dt \\
	&\quad\quad\quad + \sigma(t,X^\alpha_t,\mu_t,\alpha_t)dW_t + \sigma_0(t,\mu_t)dB_t, \ X^\alpha_0 = \xi, \\
&\mu = \text{Law}(X^{\alpha^*} \ | \ B).
\end{cases}
\]
\end{problem}
Here $\xi$ is some given initial state, $X^\alpha$ is the state process subject to the control $\alpha$, and $\mu$ is a random measure on the path space with time-marginals $(\mu_t)_{t \in [0,T]}$. Definition \ref{def:cnsolution} will formulate this precisely, but for now a more careful explanation is as follows: Given a random measure $\mu$, treat it as \emph{fixed} and solve the stochastic optimal control problem defined in the first two lines above. If an optimal control $\alpha^*$ may be found, compute the conditional law of $X^{\alpha^*}$ given the common noise $B$. If the resulting conditional law matches $\mu$, then we say $\mu$ is an MFG equilibrium. The goal of this paper is to link this problem with the following MFG without common noise:
\begin{problem}{without common noise} \label{mfgncn}
\[
\begin{cases}
&\alpha^* \in \arg\max_\alpha\E\left[\int_0^Tf(t,Y^\alpha_t,\bar{\mu}_t,\alpha_t)dt + g(Y^\alpha_T,\bar{\mu}_T)\right], \\
&dY^\alpha_t = b(t,Y^\alpha_t,\bar{\mu}_t,\alpha_t)dt + \sigma(t,Y^\alpha_t,\bar{\mu}_t,\alpha_t)dW_t, \ Y^\alpha_0 = \xi,  \\
&\bar{\mu} = \text{Law}(Y^{\alpha^*}).
\end{cases}
\]
\end{problem}
The structure of the problem will be made completely precise in Definition \ref{def:ncnsolution}. It is exactly like the common noise problem except that now $\bar{\mu}$ is a \emph{deterministic} measure, matched to the (unconditional) law of the optimally controlled state process $Y^{\alpha^*}$. The crucial structural condition that allows us to relate these two problems is \emph{translation invariance}; we assume that $b$, $\sigma$, $f$, and $g$ satisfy a condition of the form $b(t,x+q,\mu,a) = b(t,x,\mu(\cdot + q),a)$, for all $q$. 
The procedure for constructing common-noise solutions is as follows:
\begin{enumerate}
\item Solve the MFG \emph{without} common noise to get $\bar{\mu}$ and $\alpha^*$.
\item Using $\bar{\mu}$ from step (1), solve the SDE
\[
dq_t = b_0(t,\bar{\mu}_t(\cdot - q_t))dt + \sigma_0(t,\bar{\mu}_t(\cdot - q_t))dB_t, \ q_0 = 0.
\]
\item An equilibrium for the MFG \emph{with} common noise is then given by $\mu := \bar{\mu}(\cdot - q)$, and the same control $\alpha^*$ is optimal. (Define $X^\alpha := Y^\alpha + q$ for all $\alpha$.)
\end{enumerate}
The role of the translation invariance is to isolate the effect of the common noise by decomposing the equilibrium measure flow $(\mu_t)_{t \in [0,T]}$ into a deterministic measure flow $(\bar{\mu}_t)_{t \in [0,T]}$ shifted by a highly tractable finite-dimensional stochastic process $(q_t)_{t \in [0,T]}$. 
We show also that one may invert this procedure to construct a solution of the MFG \emph{without} common noise from a solution of the MFG \emph{with} common noise, if one is willing to work with a weaker notion of solution. While this is less obviously useful, it enables uniqueness arguments and thus completes the connection between the two systems. For the sake of concreteness we work with a finite time horizon $T > 0$, but it should be clear from the analysis that our construction is more broadly applicable, for example to infinite-horizon or ergodic objectives. Finally, it should be remarked that our construction also works on the level of the $n$-player game, but for the sake of brevity we will not discuss this.

While no concrete MFG models are presented in this note, our results provide a tractable method for incorporating common noise terms in MFG models, which often makes the models more realistic or robust. For example, using our construction, the models of population distribution of \cite{gueantlasrylionsmfg} and the flocking models of \cite{nourian-cuckersmalemfg1,carmonalacker-probabilisticweakformulation} can easily be extended to include common noise. The common noise systemic risk model of \cite{carmonafouque-systemicrisk} fits perfectly into our framework, although a direct analysis was possible for this model because of its relatively simple linear-quadratic structure.

Our construction is inspired by the paper \cite{gueantlasrylionsmfg} of Gu\'eant, Lasry, and Lions, in which the equilibrium is computed explicitly for a specific common-noise MFG model of income distribution. While this equilibrium measure flow $(\mu_t)_{t \in [0,T]}$ is indeed random, it may be decomposed into
\[
\mu_t = \nu(q_t \cdot), \text{ i.e. } \mu_t(A) = \nu(q_tA) \text{ for measurable } A \subset \R,
\]
where $\nu$ is a deterministic (Pareto) distribution and $q_t$ is a \emph{one-dimensional} stochastic process. 
This is a multiplicative decomposition, whereas our decompositions are additive. 
Conceivably, many other classes of common-noise MFG models may permit similar decompositions, in which the common noise has a simple finite-dimensional effect on the measure flow.

This note is a contribution to the wellposedness theory for MFGs. The theory was introduced by Lasry and Lions \cite{lasrylionsmfg} and Huang, Malham\'e, and Caines in \cite{huangmfg1,huangmfg3}, largely as a tool for studying limits and approximations of Nash equilibria for corresponding $n$-player games of a certain symmetric type. When $n$ is large, $n$-player stochastic differential games are highly intractable, and the MFG limit is often easier to analyze while still providing a good approximation of the more realistic $n$-player system. This has naturally led to a substantial literature on existence and uniqueness for MFG equilibria; for additional background see the surveys \cite{cardaliaguet-mfgnotes,gomessaude-mfgsurvey} or the more probabilistic \cite{carmonadelarue-mfg,bensoussan-mfgbook}.

To the best of the authors' knowledge, the only general existence results for MFGs with common noise appear in the two recent papers \cite{carmonadelaruelacker-mfgcommonnoise,ahuja-mfgwellposedness}, although common noise had appeared already in specific models in \cite{gueantlasrylionsmfg,carmonafouque-systemicrisk}. Very recently, the papers \cite{carmonadelarue-master,bensoussanfrehseyam-master} derive the so-called \emph{master equation}, which reformulates the problem in terms of a single PDE, but  no existence results are provided. Under a monotonicity assumption similar to that of Lasry and Lions \cite{lasrylionsmfg}, Ahuja \cite{ahuja-mfgwellposedness} proves existence and uniqueness for a class of nearly linear-quadratic MFGs with common noise. The existence results of \cite{carmonadelaruelacker-mfgcommonnoise} apply to much more general systems but provide only \emph{weak solutions}, the main differences being that $\mu$ is not necessarily $B$-measurable and that the fixed point condition $\mu = \text{Law}(X^{\alpha^*} \ | \ B)$ is replaced by the weaker condition $\mu = \text{Law}(X^{\alpha^*} \ | \ B,\mu)$. The present paper provides new results on \emph{strong} solutions, but only for the particular class of \emph{translation invariant} MFG models. However, the objective of this note is not so much to prove a precise existence result but rather to provide a mechanism for constructing strong solutions to a large class of common noise mean field games, in a surprisingly tractable manner.

The first main result, Theorem \ref{th:main}, shows how to construct a strong solution of the problem with common noise from a strong solution of the problem without common noise. 
This construction generalizes to weak solutions as well, and our second main result, Theorem \ref{th:mainconverse}, is a converse, allowing us to construct a weak solution of the problem without common noise from a weak solution of the problem with common noise. 
In \cite{lacker-meanfieldlimit}, it is proven that the set of \emph{weak solutions} introduced in \cite{carmonadelaruelacker-mfgcommonnoise} precisely characterizes the set of possible limits of approximate Nash equilibria of the corresponding $n$-player games, as the number of agents $n$ tends to infinity. (The sense in which this limit is meant is made clear in \cite{lacker-meanfieldlimit}.) Hence, Theorem \ref{th:mainconverse} (or more specifically its Corollary \ref{co:uniqueness}) is useful from the perspective of the $n$-player games, as it shows there is a complete correspondence between the sets of weak solutions of the two problems, with and without common noise.

The main result is set up in Section \ref{se:main} stated precisely in Theorem \ref{th:main}. Section \ref{se:application} presents some applications, including a precise existence result with easily verifiable assumptions. Section \ref{se:uniqueness} discusses the converse to Theorem \ref{th:main}, completing the connection between the two MFG problems.

\section{Main results} \label{se:main}
Given a Polish space $E$, let $\P(E)$ denote the Borel probability measures on $E$. Endow $\P(E)$ with the topology of weak convergence and the corresponding Borel $\sigma$-field. Let $\C^k := C([0,T];\R^k)$ denote the space of continuous $\R^k$-valued paths, endowed with the supremum norm and Borel $\sigma$-field. Given $\mu \in \P(\C^k)$ and $t \in [0,T]$, let $\mu_t \in \P(\R^k)$ denote the image of the projection $x \mapsto x_t$ under $\mu$. Given $\mu \in \P(\R^k)$ (resp. $\P(\C^k)$) and $q \in \R^k$ (resp. $\C^k$), the translation by $-q$ of $\mu$ is denoted $\mu(\cdot + q)$, which is the image of map $x \mapsto x - q$ under $\mu$. The key assumption will involve the following definition:

\begin{definition}
A subset $D$ of $\P(\R^d)$ is said to be \emph{translation invariant} if $\mu(\cdot + q) \in D$ for each $q \in \R^k$ and $\mu \in D$. Given such a $D$ and a function $F : \R^d \times D \rightarrow E$ for some set $E$, we say $F$ is \emph{translation invariant} if, for each $x,q \in \R^k$ and $\mu \in D$,
\[
F(x+q,\mu) = F(x,\mu(\cdot + q)),
\]
\end{definition}

The domains $D$ of interest to us are the entire space $\P(\R^k)$ and the subset consisting of measures admitting Lebesgue-densities. The guiding examples of translation invariant functions $F$ are convolutions,
\[
F(x,\mu) = G\left(\int\phi(x-y)\mu(dy)\right),
\]
and local interactions (noting that $D = \{\mu \in \P(\R^k) : \mu \ll \text{Lebesgue}\}$ is translation invariant),
\[
F(x,\mu) = G\left(\frac{d\mu}{dx}(x)\right).
\]
The term \emph{translation invariant} is chosen because of the equivalent definition that $F(x+q,\mu(\cdot - q)) = F(x,\mu)$, which shows that $F$ is unchanged when the same translation is applies to both the spatial variable and the measure.

We are given the following data. The control space $A$ is a closed subset of Euclidean space, $\lambda \in \P(\R^d)$ is an initial state distribution, and $d$, $m$, and $m_0$ are positive integers. We are given an exponent $p \ge 0$, simply to specify some class of admissible controls by way of an integrability assumption. A translation invariant domain $D \subset \P(\R^d)$ and the following functions are given:
\begin{align*}
(b,\sigma,f) &: [0,T] \times \R^d \times D \times A \rightarrow \R^d \times \R^{d \times m} \times \R, \\
(b_0,\sigma_0) &: [0,T] \times D \rightarrow \R^d \times \R^{d \times m_0}, \\
g &: \R^d \times D \rightarrow \R.
\end{align*}

\begin{assumption}
Each function is jointly measurable, and for each fixed $(t,a) \in [0,T] \times A$ the functions $b$, $\sigma$, $f$, and $g$ are translation invariant on $\R^d \times D$.
\end{assumption}

The assumptions on the coefficients are minimal for now, but condition (3) of each of the following definitions will implicitly require that certain integrals make sense. Consider the following notions of MFG solution:

\begin{definition}[Strong common-noise solution] \label{def:cnsolution}
A \emph{strong solution of $CN(b,\sigma,f,g,b_0,\sigma_0)$} is a tuple $(\Omega,(\F_t)_{t \in [0,T]},P,B,W,\mu,\alpha,X)$, where $(\Omega,(\F_t)_{t \in [0,T]},P)$ is a complete filtered probability space supporting $(B,W,\mu,\alpha,X)$ satisfying the following:
\begin{enumerate}
\item $B$ and $W$ are independent $(\F_t)_{t \in [0,T]}$-Wiener processes of dimension $m_0$ and $m$, respectively, and $X$ is a continuous $d$-dimensional $(\F_t)_{t \in [0,T]}$-adapted process with $P \circ X_0^{-1} = \lambda$.
\item $\alpha$ is an $(\F_t)_{t \in [0,T]}$-progressive $A$-valued process satisfying $\E^P\int_0^T|\alpha_t|^pdt < \infty$.
\item The state equation holds:
\begin{align}
dX_t = &[b_0(t,\mu_t) + b(t,X_t,\mu_t,\alpha_t)]dt + \sigma(t,X_t,\mu_t,\alpha_t)dW_t + \sigma_0(t,\mu_t)dB_t. \label{def:SDE-X}
\end{align}
\item If $(\Omega',\F'_t,P')$ is another filtered probability space supporting processes $(B',W',\mu',\alpha',X')$ satisfying (1-3) and $P \circ (B,\mu)^{-1} = P' \circ (B',\mu')^{-1}$, then
\[
\E^P\left[\int_0^Tf(t,X_t,\mu_t,\alpha_t)dt + g(X_T,\mu_T)\right] \ge \E^{P'}\left[\int_0^Tf(t,X'_t,\mu'_t,\alpha'_t)dt + g(X'_T,\mu'_T)\right].
\]
\item $\mu$ is a random element of $\P(\C^d)$ such that $\mu = P(X \in \cdot \ | \ B)$ a.s., and $\mu_t \in D$ a.s. for each $t \in [0,T]$.
\end{enumerate}
\end{definition}

\begin{remark}
Note that $\mu = P(X \in \cdot \ | \ B)$ implies that $\mu_t = P(X_t \in \cdot \ | \ B) = P(X_t \in \cdot \ | \ \F^B_t)$, where $\F^B_t = \sigma(B_s : s \le t)$, since $(X_s,B_s)_{s \in [0,t]}$ and $(B_s - B_t)_{s \in [t,T]}$ are independent. That is, $(\mu_t)_{t \in [0,T]}$ is $(\F^B_t)_{t \in [0,T]}$-adapted.
\end{remark}

\begin{definition}[Strong no-common-noise solution] \label{def:ncnsolution}
A \emph{strong solution of $NCN(b,\sigma,f,g)$} is a tuple $(\Omega,(\F_t)_{t \in [0,T]},P,W,\bar{\mu},\alpha,Y)$, where $(\Omega,(\F_t)_{t \in [0,T]},P)$ is a complete filtered probability space supporting $(W,\alpha,Y)$ satisfying the following:
\begin{enumerate}
\item $W$ is a $(\F_t)_{t \in [0,T]}$-Wiener processes of dimension $m$, respectively, and $Y$ is a continuous $d$-dimensional $(\F_t)_{t \in [0,T]}$-adapted process with $P \circ Y_0^{-1} = \lambda$.
\item $\alpha$ is an $(\F_t)_{t \in [0,T]}$-progressive $A$-valued process satisfying $\E^P\int_0^T|\alpha_t|^pdt < \infty$.
\item The state equation holds:
\begin{align}
dY_t &= b(t,Y_t,\bar{\mu}_t,\alpha_t)dt + \sigma(t,Y_t,\bar{\mu}_t,\alpha_t)dW_t. \label{def:SDE-Y}
\end{align}
\item If $(\Omega',\F'_t,P')$ is another filtered probability space supporting processes $(B',W',\alpha',Y')$ satisfying (1-3), then
\[
\E^P\left[\int_0^Tf(t,Y_t,\bar{\mu}_t,\alpha_t)dt + g(Y_T,\bar{\mu}_T)\right] \ge \E^{P'}\left[\int_0^Tf(t,Y'_t,\bar{\mu}_t,\alpha'_t)dt + g(Y'_T,\bar{\mu}_T)\right].
\]
\item $\bar{\mu} \in \P(\C^d)$ satisfies $\bar{\mu} = P \circ Y^{-1}$ and $\bar{\mu}_t \in D$ for all $t \in [0,T]$.
\end{enumerate}
\end{definition}

The first main result of the paper is the following:

\begin{theorem} \label{th:main}
Suppose $(\Omega,(\F_t)_{t \in [0,T]},P,W,\bar{\mu},\alpha,Y)$ is a strong solution of $NCN(b,\sigma,f,g)$. By extending the probability space, we may assume that $\Omega$ supports an $m_0$-dimensional $(\F_t)_{t \in [0,T]}$-Wiener process $B$ independent of $(W,\alpha,Y)$. Suppose weak existence and pathwise uniqueness hold for the following SDE:
\begin{align}
dq_t = b_0(t,\bar{\mu}_t(\cdot - q_t))dt + \sigma_0(t,\bar{\mu}_t(\cdot - q_t))dB_t, \ q_0 = 0. \label{def:qSDE}
\end{align}
If $X := Y + q$ and $\mu := \bar{\mu}(\cdot - q)$, then $(\Omega,(\F_t)_{t \in [0,T]},P,B,W,\mu,\alpha,X)$ is a strong solution of $CN(b,\sigma,f,g,b_0,\sigma_0)$.
\end{theorem}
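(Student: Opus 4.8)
The plan is to verify the five conditions of Definition \ref{def:cnsolution} for the tuple $(\Omega,(\F_t)_{t\in[0,T]},P,B,W,\mu,\alpha,X)$, with the only genuinely nontrivial point being the optimality condition (4). First, the routine conditions. Weak existence and pathwise uniqueness for \eqref{def:qSDE} yield, via the Yamada--Watanabe theorem, a measurable functional $\Phi$ on $m_0$-dimensional Wiener space such that $q = \Phi(B)$ is the strong solution; in particular $q$ is $(\F^B_t)_{t\in[0,T]}$-adapted, so $X = Y + q$ is $(\F_t)_{t\in[0,T]}$-adapted with $X_0 = Y_0$, giving $P \circ X_0^{-1} = \lambda$ and hence (1). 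Condition (2) is inherited verbatim from the NCN solution. For (5), note $\mu = \bar\mu(\cdot - q)$ is $\sigma(B)$-measurable and $\mu_t = \bar\mu_t(\cdot - q_t) \in D$ by translation invariance of $D$; since $B$ was adjoined independently of $(W,\alpha,Y)$, conditioning on $B$ leaves the law of $Y$ equal to $\bar\mu$ while fixing $q$, whence $P(X \in \cdot \mid B) = P(Y + q \in \cdot \mid B) = \bar\mu(\cdot - q) = \mu$.

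The state equation (3) is the heart of the forward computation. Adding the dynamics of $Y$ and $q$ gives
\[
dX_t = b(t,Y_t,\bar\mu_t,\alpha_t)\,dt + \sigma(t,Y_t,\bar\mu_t,\alpha_t)\,dW_t + b_0(t,\bar\mu_t(\cdot - q_t))\,dt + \sigma_0(t,\bar\mu_t(\cdot - q_t))\,dB_t.
\]
The key algebraic identity is that $\nu(\cdot - q)(\cdot + q) = \nu$ for every $\nu \in D$ and $q \in \R^d$, so translation invariance yields $b(t,Y_t,\bar\mu_t,\alpha_t) = b(t,Y_t + q_t,\bar\mu_t(\cdot - q_t),\alpha_t) = b(t,X_t,\mu_t,\alpha_t)$, and likewise for $\sigma$, while $b_0(t,\bar\mu_t(\cdot - q_t)) = b_0(t,\mu_t)$ and $\sigma_0(t,\bar\mu_t(\cdot - q_t)) = \sigma_0(t,\mu_t)$ by definition of $\mu_t$. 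Substituting recovers exactly \eqref{def:SDE-X}.

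The main obstacle is the optimality condition (4), which I would establish by transporting an arbitrary competitor back to the NCN problem. Suppose $(\Omega',\F'_t,P')$ supports $(B',W',\mu',\alpha',X')$ satisfying (1)--(3) with $P \circ (B,\mu)^{-1} = P' \circ (B',\mu')^{-1}$. Under $P$ we have the deterministic relation $\mu = \bar\mu(\cdot - \Phi(B))$ a.s., so equality of joint laws forces $\mu' = \bar\mu(\cdot - q')$ a.s. with $q' := \Phi(B')$; by the strong-solution property of $\Phi$, this $q'$ solves \eqref{def:qSDE} driven by $B'$, i.e. $dq'_t = b_0(t,\mu'_t)\,dt + \sigma_0(t,\mu'_t)\,dB'_t$. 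Setting $Y' := X' - q'$ and running the computation in reverse, translation invariance gives $dY'_t = b(t,Y'_t,\bar\mu_t,\alpha'_t)\,dt + \sigma(t,Y'_t,\bar\mu_t,\alpha'_t)\,dW'_t$ with $Y'_0 = X'_0$ of law $\lambda$, so $(\Omega',\F'_t,P',W',\bar\mu,\alpha',Y')$ is an admissible competitor for $NCN(b,\sigma,f,g)$. Finally, translation invariance of $f$ and $g$ gives $f(t,X_t,\mu_t,\alpha_t) = f(t,Y_t,\bar\mu_t,\alpha_t)$ and $g(X_T,\mu_T) = g(Y_T,\bar\mu_T)$, together with the analogous identities for the primed quantities; hence the CN objective for $(\mu,\alpha,X)$ equals the NCN objective for $(\alpha,Y)$, the CN objective for $(\mu',\alpha',X')$ equals the NCN objective for $(\alpha',Y')$, and the required inequality follows from optimality of $\alpha$ in the NCN solution (condition (4) of Definition \ref{def:ncnsolution}).

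I expect the genuinely delicate point to be the claim that the competitor's measure $\mu'$ must take the form $\bar\mu(\cdot - q')$ for some $q'$ solving \eqref{def:qSDE}: this is exactly where pathwise uniqueness is indispensable, since it is what makes $q = \Phi(B)$ a measurable function of $B$ alone and thereby lets the law-matching hypothesis $P \circ (B,\mu)^{-1} = P' \circ (B',\mu')^{-1}$ transfer the whole structure to the primed space. I would also check the standard but necessary point that adjoining $B$ independently preserves the $(\F_t)_{t\in[0,T]}$-Wiener property of $W$ together with the adaptedness and admissibility of $\alpha$ and $Y$, so that both the constructed tuple and the transported competitor are genuinely admissible.
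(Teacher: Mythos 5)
Your proposal is correct and follows essentially the same route as the paper's proof: the forward direction (state equation, conditional law $\mu = P(X \in \cdot \mid B)$ via independence of $Y$ and $B$) is identical, and the optimality step is the same transport argument — solve the SDE \eqref{def:qSDE} on the competitor's space, use pathwise uniqueness (via Yamada--Watanabe) to conclude $\mu' = \bar{\mu}(\cdot - q')$ a.s., set $Y' = X' - q'$, and invoke NCN optimality together with translation invariance of $f$ and $g$. The only cosmetic difference is that you package the uniqueness step through the explicit solution functional $q = \Phi(B)$, whereas the paper solves the SDE directly on $\Omega'$ and matches joint laws $P' \circ (B',\mu',q')^{-1} = P \circ (B,\mu,q)^{-1}$; these are interchangeable formulations of the same use of weak existence plus pathwise uniqueness.
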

\begin{proof}
Solve the SDE \eqref{def:qSDE} on $\Omega$, and note that $(q_t)_{t \in [0,T]}$ is adapted to (the $P$-completion of) $(\F^B_t := \sigma(B_s : s \le t))_{t \in [0,T]}$ and that $\mu$ is the translation of the \emph{deterministic} measure $\bar{\mu} \in \P(\C^d)$ by the \emph{stochastic} process $q$.
First, note that translation invariance implies $b(t,Y_t,\bar{\mu}_t,\alpha_t) = b(t,X_t,\mu_t,\alpha_t)$, and similarly for $\sigma$, $f$, and $g$. Thus
\begin{align*}
dX_t &= dY_t + dq_t \\
	&= [b_0(t,\bar{\mu}_t(\cdot - q_t)) + b(t,Y_t,\bar{\mu}_t,\alpha_t)]dt + \sigma(t,Y_t,\bar{\mu}_t,\alpha_t)dW_t + \sigma_0(t,\bar{\mu}_t(\cdot - q_t))dB_t, \\
	&= [b_0(t,\mu_t) + b(t,X_t,\mu_t,\alpha_t)dt] + \sigma(t,X_t,\mu_t,\alpha_t)dW_t + \sigma_0(t,\mu_t)dB_t.
\end{align*}
Note also that $X_0 = Y_0$. Since $\bar{\mu} = P \circ Y^{-1}$, since $Y$ is independent of $B$, and since $q$ is $B$-measurable, we have
\begin{align*}
\mu &= \bar{\mu}(\cdot - q) = P( Y \in \cdot - q \ | \ B ) = P( X \in \cdot \ | \ B).
\end{align*}
It remains to check the optimality property (4) of Definition \ref{def:cnsolution}. Let $(\Omega',(\F'_t)_{t \in [0,T]},P')$ be any filtered probability space supporting processes $(B',W',\mu',\alpha',X')$ satisfying (1-3) of Definition \ref{def:cnsolution} and $P' \circ (B',\mu')^{-1} = P \circ (B,\mu)^{-1}$. 
Let $(q'_t)_{t \in [0,T]}$ denote the unique strong solution on $\Omega'$ of the SDE
\[
dq'_t = b_0(t,\bar{\mu}_t(\cdot - q'_t))dt + \sigma_0(t,\bar{\mu}_t(\cdot - q'_t))dB'_t, \ q'_0 = 0.
\]
Then $q'$ is $B'$-measurable and $P' \circ (B',q')^{-1} = P \circ (B,q)^{-1}$ by uniqueness. Thus $P' \circ (B',\mu',q')^{-1} = P \circ (B,\mu,q)^{-1}$, since $\mu'$ (resp. $\mu$) is $B'$-measurable (resp. $B$-measurable), and we conclude that $\mu' = \bar{\mu}(\cdot - q')$ a.s. Define $Y' := X' - q'$, and again use translation invariance of $b$ and $\sigma$ to get
\begin{align*}
dY'_t &= dX'_t - dq'_t = b(t,X'_t,\mu'_t,\alpha'_t)dt + \sigma(t,X'_t,\mu'_t,\alpha'_t)dW_t, \\
	&= b(t,Y'_t,\bar{\mu}_t,\alpha'_t)dt + \sigma(t,Y_t,\bar{\mu}_t,\alpha'_t)dW_t.
\end{align*}
Since $(\Omega,(\F_t)_{t \in [0,T]},P,W,\alpha,Y)$ is a NCN solution, we may apply the optimality condition (4) of Definition \ref{def:ncnsolution} and then translation invariance to get
\begin{align*}
0 &\le \E^{P}\left[\int_0^Tf(t,Y_t,\bar{\mu}_t,\alpha_t)dt + g(Y_T,\bar{\mu}_T)\right] - \E^{P'}\left[\int_0^Tf(t,Y'_t,\bar{\mu}_t,\alpha'_t)dt + g(Y'_T,\bar{\mu}_T)\right] \\
	&= \E^{P}\left[\int_0^Tf(t,X_t,\mu_t,\alpha_t)dt + g(X_T,\mu_T)\right] - \E^{P'}\left[\int_0^Tf(t,X'_t,\mu'_t,\alpha'_t)dt + g(X'_T,\mu'_T)\right].
\end{align*}
\end{proof}

Note that the definition of NCN solution involves \emph{weak controls}, according to the terminology of \cite{carmonadelaruelacker-mfgcommonnoise}, which are not required to be adapted to the filtration $\F_t = \sigma(Y_0,W_s : s \le t)$ generated by the given sources of randomness. Only the recent papers \cite{lacker-mfgviacontrolledmtgproblems,fischer-mfgconnection,carmonalacker-probabilisticweakformulation} work with essentially the same Definition \ref{def:ncnsolution}, using weak controls; most probabilistic notions of MFG solutions in the literature restrict their attention to strong controls, adapted to $\sigma(Y_0,W_s : s \le t)$. But it is rather well-known from the theory of relaxed controls that this additional flexibility does not typically help the agent; a control which is optimal among the class of strong controls is typically also optimal among the class of weak controls. Thus, a special case of our notion of strong NCN solution is the usual MFG solution appearing in the probabilistic literature (e.g. \cite{carmonadelarue-mfg,bensoussan-mfgbook}). MFG solutions obtained by PDE methods are translated to the stochastic setting through verification theorems which allow for weak controls, and thus our notion of solution includes those defined by (classical) PDE solutions (e.g. \cite{lasrylionsmfg}). 
To be somewhat more precise:

\begin{proposition}
Suppose that $b$ and $\sigma$ are uniformly Lipschitz in $x$, $A$ is compact, and $b$, $\sigma$, $f$, and $g$ are all jointly continuous in $(x,a)$. Then it is equivalent in Definition \ref{def:cnsolution} to replace condition (4) with one requiring that $\alpha'$ be adapted to (the $P'$-completion of) $\sigma(X'_0,B'_s,W'_s : s \le t)$. Similarly, in Definition \ref{def:ncnsolution}, it is equivalent to require optimality only among $\sigma(Y'_0,W'_s:s\le t)$-adapted controls.
\end{proposition}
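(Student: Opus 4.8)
The plan is to prove the two claims by a standard relaxed/weak control argument, which reduces to showing that the supremum of the expected reward over weak controls equals the supremum over strong controls adapted to the driving-noise filtration. I will treat the NCN case first, since the CN case reduces to it via the transformation $X' = Y' + q'$ already exploited in Theorem~\ref{th:main}.

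First I would fix the measure flow $\bar\mu$ and view the control problem in Definition~\ref{def:ncnsolution} as a classical stochastic control problem with coefficients $b(t,\cdot,\bar\mu_t,\cdot)$, $\sigma(t,\cdot,\bar\mu_t,\cdot)$, running reward $f(t,\cdot,\bar\mu_t,\cdot)$ and terminal reward $g(\cdot,\bar\mu_T)$, all jointly continuous in $(x,a)$ with $b,\sigma$ Lipschitz in $x$ and $A$ compact. The key tool is the \emph{relaxed control} formulation together with the equivalence between relaxed and strict controls under convexity of the coefficients in the control, or more robustly the chattering/mimicking lemma that shows the value is the same whether one optimizes over weak controls, relaxed controls, or strong feedback controls. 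Concretely, I would argue that (i) any weak control achieves a reward no larger than the relaxed-control value, and (ii) the relaxed-control value is attained (in value) by a strong Markovian control adapted to $\sigma(Y'_0,W'_s:s\le t)$, using a superposition/mimicking argument (in the spirit of Krylov's Markov selection or the Brunick--Shreve mimicking theorem, or El~Karoui--Nguyen--Jeanblanc-Picqu\'e) that preserves the one-dimensional time-marginals of the controlled state and hence the value of $f$ and $g$. The compactness of $A$, continuity in $a$, and Lipschitz-in-$x$ hypotheses are exactly what is needed to guarantee tightness of relaxed controls and solvability of the associated martingale problem, so that a strong optimizer exists.

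Next, to convert this to the stated equivalence I would show that replacing condition~(4) by the weaker requirement (optimality only against $\sigma(Y'_0,W'_s:s\le t)$-adapted competitors) does not change the set of solutions. One direction is trivial: since strong controls form a subclass of weak controls, optimality against all weak competitors immediately implies optimality against strong ones. For the converse, suppose $(\Omega,(\F_t),P,W,\bar\mu,\alpha,Y)$ is optimal against strong competitors; I must rule out the existence of a weak competitor $(\Omega',(\F'_t),P',W',\alpha',Y')$ with strictly larger reward. This is precisely where the relaxed-control equivalence enters: the supremum over weak competitors equals the supremum over strong competitors by the mimicking argument above, so no weak competitor can beat the strong-optimal value achieved by $Y$. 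The CN claim then follows verbatim: given the NCN equivalence, the map $Y'\mapsto X'=Y'+q'$ and $\bar\mu\mapsto\mu'=\bar\mu(\cdot-q')$ from the proof of Theorem~\ref{th:main} is a bijection between NCN competitors and CN competitors with matching $(B',\mu')$-law, it preserves the reward by translation invariance, and it maps $\sigma(Y'_0,W'_s:s\le t)$-adapted controls to $\sigma(X'_0,B'_s,W'_s:s\le t)$-adapted controls (since $q'$ is $B'$-measurable and $Y'_0=X'_0$), so the two adaptedness constraints correspond exactly.

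The main obstacle is step~(ii): establishing that the relaxed (or weak-control) value is attained by a strong, driving-noise-adapted control. This requires invoking a mimicking/superposition theorem to replace a general weak or relaxed optimizer by a Markovian strong control with the same marginal flow, and verifying that the hypotheses of such a theorem are met here. The subtlety is that the reward depends on the \emph{joint} law of $(Y_t,\alpha_t)$ through the running cost $f(t,Y_t,\bar\mu_t,\alpha_t)$, not merely on the marginals of $Y$, so the mimicking must be performed at the level of the relaxed control and its associated generator; the appropriate statement is that any attainable value is matched by a strong control preserving the relevant one-time expectations of $f$. I would cite the relaxed-control framework of El~Karoui, Nguyen, and Jeanblanc-Picqu\'e and the relaxed-MFG formulation of \cite{carmonalacker-probabilisticweakformulation} to make this rigorous, rather than re-deriving it; the Lipschitz-in-$x$ and compact-$A$ assumptions are invoked precisely to guarantee the existence of the strong optimizer and the validity of the mimicking step.
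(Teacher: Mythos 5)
For the NCN case your argument is, in substance, the paper's own proof: the paper gives no argument at all, but simply cites the compactification results of El Karoui--Nguyen--Jeanblanc-Picqu\'e and the account of El Karoui--Tan, which contain exactly the statement you sketch (under Lipschitz-in-$x$ coefficients, compact $A$, and joint continuity in $(x,a)$, the supremum of the reward over weak/relaxed controls coincides with the supremum over controls adapted to the driving noise, so a strong-optimal control is also weak-optimal). Your relaxed-control/chattering/mimicking sketch, which ultimately defers to the same references, is acceptable for that half.

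The genuine gap is in your reduction of the CN case to the NCN case. You claim that $Y' \mapsto X' = Y' + q'$, $\bar{\mu} \mapsto \mu' = \bar{\mu}(\cdot - q')$ gives a bijection between NCN competitors and CN competitors with matching $(B',\mu')$-law. This presumes that the random measure $\mu$ of the CN solution under consideration (equivalently, of its competitors, since only the law of $(B',\mu')$ is prescribed) decomposes as a single \emph{deterministic} $\bar{\mu} \in \P(\C^d)$ translated by a $B'$-measurable process $q'$. That decomposition holds for the solutions constructed in Theorem \ref{th:main}, but it is not implied by Definition \ref{def:cnsolution}: condition (5) gives only $B$-measurability, $\mu = P(X \in \cdot \mid B)$. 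For a general CN solution, pulling back by the process $q$ of Theorem \ref{th:mainconverse} yields $\bar{\mu} = \mu(\cdot + q)$, which is still a $B$-measurable \emph{random} measure flow; the translated control problem therefore still has random coefficients, and the NCN case of the proposition (which concerns deterministic $\bar{\mu}$) does not apply to it. Making $\bar{\mu}$ deterministic is precisely the content of the extra uniqueness hypotheses in Corollary \ref{co:uniqueness}, which you do not have here. The correct repair is not to reduce at all: the compactification results you (and the paper) cite are formulated for control problems with random, progressively measurable coefficients, so they apply directly to the CN optimization problem in which the law of $(B,\mu)$ is fixed and the coefficients $b_0(t,\mu_t)$, $\sigma_0(t,\mu_t)$, $b(t,x,\mu_t,a)$, $f(t,x,\mu_t,a)$, $g(x,\mu_T)$ are random but adapted to the noise filtration --- which is how the paper disposes of both cases with a single citation.
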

\begin{proof}
See, for example, \cite{elkaroui-compactification} or the recent account of \cite{elkarouitan-capacities}.
\end{proof}

\begin{remark}
The construction of CN solutions in Theorem \ref{th:main} leaves the optimal control unchanged, and thus we may construct a CN solution whose optimal control is independent of the common noise $B$ and, a fortiori, independent of the random measure $\mu$. Intuitively, the entire population is affected in parallel by the common noise through $(q_t)_{t \in [0,T]}$, and because of the translation invariance the common noise does not influence the optimization.
\end{remark}

\section{Applications} \label{se:application}
This section discusses examples of applications of Theorem \ref{th:main}. First, some comments on the SDE \eqref{def:qSDE} are in order. Given $p \ge 1$, define $\P^p(\R^d)$ to be the set of $\mu \in \P(\R^d)$ with $\int|x|^p\mu(dx) < \infty$. Define the $p$-Wasserstein distance $\W_p$ on $\P^p(\R^d)$ by
\[
\W_p(\mu,\nu) := \inf_\gamma\left(\int_{\R^d \times \R^d}|x-y|^p\gamma(dx,dy)\right)^{1/p},
\]
where the infimum is over $\gamma \in \P(\R^d \times \R^d)$ with marginals equal to $\mu$ and $\nu$.
The assumption of solvability of the SDE \eqref{def:qSDE} is guaranteed by assuming that the coefficients $b_0(t,\mu)$ and $\sigma_0(t,\mu)$ are $\W_p$-Lipschitz in $\mu$, uniformly with respect to $t$. Indeed, for any $\mu \in \P^p(\R^d)$ and $q \in \R$ we have
\[
\W_p(\mu(\cdot - q), \mu(\cdot - q')) \le \left(\int_{\R^d}|(x+q) - (x+q')|^p\mu(dx)\right)^{1/p} = |q-q'|,
\]
and it follows that $b_0(t,\bar{\mu}_t(\cdot - q))$ and $\sigma_0(t,\bar{\mu}_t(\cdot - q))$ are Lipschitz in $q$, uniformly with respect to $t$, for each $\bar{\mu} \in \P^p(\C^d)$.

Theorem \ref{th:main} allows us to derive common noise existence results from the existence results without common noise of \cite{lacker-mfgviacontrolledmtgproblems}. Take $D = \P^p(\R^d)$ in the following.

\begin{theorem} \label{th:application}
Under the following assumptions, there exists a strong solution of $CN(b,\sigma,f,g,b_0,\sigma_0)$:
\begin{enumerate}
\item The control space $A$ is a closed subset of a Euclidean space.
\item The initial distribution $\lambda$ is in $\P^{p'}(\R^d)$, where $p' > p \ge \max\{1, p_\sigma\}$, $p_\sigma \in [0,2]$.
\item The functions $b$, $b_0$, $\sigma$, $\sigma_0$, $f$, and $g$ of $(t,x,\mu,a)$ are measurable in $t$ and continuous in $(x,\mu,a)$ (with respect to the metric $\W_p$ on $\P^p(\R^d)$).
\item There exists $c_1 > 0$ such that, for all $(t,\mu,a) \in [0,T] \times \P^p(\R^d) \times A$ and all $x,y \in \R^d$,
\begin{align*}
|b(t,x,\mu,a) - b(t,y,\mu,a)| + |\sigma(t,x,\mu,a) - \sigma(t,y,\mu,a)| &\le c_1|x-y|,
\end{align*}
and
\begin{align*}
|b(t,x,\mu,a)| &\le c_1\left[1 + |x| + \left(\int_{\R^d}|z|^p\mu(dz)\right)^{1/p} + |a|\right], \\
|\sigma\sigma^\top(t,x,\mu,a)| &\le c_1\left[1 + |x|^{p_\sigma} + \left(\int_{\R^d}|z|^p\mu(dz)\right)^{p_\sigma/p} + |a|^{p_\sigma}\right]
\end{align*}
\item There exist $c_2, c_3 > 0$ such that, for each $(t,x,\mu,a) \in [0,T] \times \R^d \times \P^p(\R^d) \times A$, 
\begin{align*}
|g(x,\mu)| &\le c_2\left(1 + |x|^p + |\mu|^p\right), \\
-c_2\left(1 + |x|^{p} + |\mu|^p + |a|^{p'}\right) \le f(t,x,\mu,a) &\le c_2\left(1 + |x|^p + |\mu|^p\right) - c_3|a|^{p'}
\end{align*}
\item For each $(t,x,\mu)$ the following subset of $\R^d \times \R^{d \times d} \times \R$ is convex:
\[
\left\{(b(t,x,\mu,a),\sigma\sigma^\top(t,x,\mu,a),z) : a \in A, \ z \le f(t,x,\mu,a)\right\}
\]
\item The functions $b$, $\sigma$, $f$, and $g$ are translation invariant in $(x,\mu)$, for each $(t,a)$.
\item There exists $c_4 > 0$ such that, for each $t \in [0,T]$ and $\mu,\nu \in \P^p(\R^d)$,
\[
|b_0(t,\mu) - b_0(t,\nu)| + |\sigma_0(t,\mu) - \sigma_0(t,\nu)| \le c_4\W_p(\mu,\nu).
\]
\end{enumerate}
\end{theorem}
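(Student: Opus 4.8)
The plan is to assemble the result from the two ingredients already prepared in this section: the existence theory for MFGs \emph{without} common noise of \cite{lacker-mfgviacontrolledmtgproblems}, which under assumptions (1)--(6) produces a strong $NCN$ solution, and the solvability of the auxiliary SDE \eqref{def:qSDE} furnished by assumption (8) together with the translation bound on $\W_p$. Once both are in hand, Theorem \ref{th:main} does the rest via the translation invariance (7).

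First I would match assumptions (1)--(6) to (a specialization of) the hypotheses under which \cite{lacker-mfgviacontrolledmtgproblems} establishes existence of a strong solution of $NCN(b,\sigma,f,g)$. Concretely, (1) fixes the control space, (2) supplies the initial moment with the crucial strict slack $p' > p$, (3) gives joint measurability and continuity, (4) gives the Lipschitz-in-$x$ and growth bounds on the dynamics, (5) gives the growth bounds on $f$ and $g$ (the coercive term $-c_3|a|^{p'}$ being precisely what forces any optimal control to satisfy $\E^P\int_0^T|\alpha_t|^{p'}\,dt<\infty$), and (6) is the Roxin-type convexity guaranteeing that a genuine, non-relaxed optimal control exists. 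Invoking that theorem yields a strong solution $(\Omega,(\F_t)_{t\in[0,T]},P,W,\bar{\mu},\alpha,Y)$ of $NCN(b,\sigma,f,g)$ in the sense of Definition \ref{def:ncnsolution}. As part of this step I would record the a priori estimate $\E^P\sup_{t\in[0,T]}|Y_t|^p<\infty$, which follows from the growth bound in (4), the $p'$-integrability of $\alpha$, and $\lambda\in\P^{p'}(\R^d)$ by a standard Gronwall/BDG argument; this is what guarantees $\bar{\mu}\in\P^p(\C^d)$, and hence $\bar{\mu}_t\in D=\P^p(\R^d)$ for every $t$.

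With $\bar{\mu}\in\P^p(\C^d)$ secured, I would then invoke the discussion preceding the theorem: by the elementary inequality $\W_p(\bar{\mu}_t(\cdot-q),\bar{\mu}_t(\cdot-q'))\le|q-q'|$ and the $\W_p$-Lipschitz assumption (8), the coefficients $q\mapsto b_0(t,\bar{\mu}_t(\cdot-q))$ and $q\mapsto\sigma_0(t,\bar{\mu}_t(\cdot-q))$ are Lipschitz in $q$, uniformly in $t$, and of at most linear growth in $q$ (using $\sup_t\int|x|^p\bar{\mu}_t(dx)<\infty$ to control the $t$-dependence). Classical Itô theory then gives a unique strong solution of \eqref{def:qSDE}, so in particular weak existence and pathwise uniqueness hold. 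Theorem \ref{th:main} applies verbatim and, using translation invariance (7), delivers the strong solution of $CN(b,\sigma,f,g,b_0,\sigma_0)$ given by $X=Y+q$ and $\mu=\bar{\mu}(\cdot-q)$.

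The only genuinely technical point, and the one I would handle with the most care, is the moment estimate establishing $\bar{\mu}\in\P^p(\C^d)$: one must verify that the $p'$-integrability of the optimal control actually propagates, through the at-most-linear growth of $b$ and the $p_\sigma\le 2$ growth of $\sigma\sigma^\top$, to a finite $p$-th moment of $\sup_t|Y_t|$ (the condition $p\ge\max\{1,p_\sigma\}$ and $p'>p$ being exactly what makes this closure work). Everything else is bookkeeping---aligning the two hypothesis lists and quoting standard SDE well-posedness---so the substance of the argument is that assumptions (1)--(8) have been arranged precisely so that the cited $NCN$ existence result and Theorem \ref{th:main} dovetail.
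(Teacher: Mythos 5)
Your proposal is correct and follows the paper's proof exactly: invoke the existence result of \cite{lacker-mfgviacontrolledmtgproblems} under assumptions (1)--(6) to obtain a strong NCN solution, use assumption (8) together with the inequality $\W_p(\mu(\cdot - q),\mu(\cdot - q')) \le |q - q'|$ to get well-posedness of the SDE \eqref{def:qSDE}, and then apply Theorem \ref{th:main} via the translation invariance in (7). Your additional attention to the moment estimate ensuring $\bar{\mu} \in \P^p(\C^d)$ is a worthwhile elaboration of a point the paper leaves implicit, but it is the same argument.
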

\begin{proof}
Assumptions (1-6) and \cite[Theorem 2.1]{lacker-mfgviacontrolledmtgproblems} imply that there exists an NCN solution. By assumption (8), as pointed out before, the SDE \eqref{def:qSDE} is well-posed. In light of assumption (7), Theorem \ref{th:main} applies. 
\end{proof}

Similarly, combining our Theorem \ref{th:main} with any of the results on MFG without common noise with \emph{local interactions}, such as \cite{gomes-mfgsubquadratic,gomes-mfgsuperquadratic,cardaliaguet-mfgdegenerate,lasrylionsmfg,cardaliaguet-mfgnotes,gomessaude-mfgsurvey}, we could derive existence results for some mean field games with common noise and local interactions. Aside from the specific model of \cite{gueantlasrylionsmfg}, local interactions have not been incorporated in common noise models before, and without our construction this would presumably be quite a technical matter. As it should be clear at this stage how to construct such results, and since spelling them out in detail would require yet another laundry list of technical assumptions, we suppress any further details.

We conclude the section with a useful trick which allows us to apply our theorem to certain \emph{nearly} translation invariant functions. 
Given coefficients $(b,\sigma,f,g)$ satisfying the standing assumptions, define new coefficients
\begin{align*}
\tilde{b}(t,x,\mu,a) &= Qx + b(t,x,\mu,a), \\
\tilde{f}(t,x,\mu,a) &= r_f \cdot x + f(t,x,\mu,a), \\
\tilde{g}(x,\mu) &= r_g \cdot x + g(x,\mu).
\end{align*}
where $Q$ is a $d \times d$ matrix and $r_f,r_g \in \R^d$. Define also
\[
\tilde{b}_0(\mu) := Q\int_{\R^d}y\,\mu(dy), \quad \tilde{f}_0(\mu) := \int_{\R^d}r_f \cdot y\,\mu(dy), \quad \tilde{g}_0(\mu) := \int_{\R^d}r_g \cdot y\,\mu(dy).
\]
Naively, Theorem \ref{th:main} does not apply to the coefficients $(\tilde{b},\sigma,\tilde{f},\tilde{g},b_0,\sigma_0)$, since $\tilde{b}$, $\tilde{f}$, and $\tilde{g}$ are not translation invariant.
However, simply obvserve that every solution of $CN(\tilde{b},\sigma,\tilde{f},\tilde{g},b_0,\sigma_0)$ is also a solution of $CN(\tilde{b}-\tilde{b}_0,\sigma,\tilde{f} - \tilde{f}_0,\tilde{g} - \tilde{g}_0,b_0+\tilde{b}_0,\sigma_0)$, and the converse is true as well. Note, of course, that subtracting $\tilde{f}_0$ and $\tilde{g}_0$ does not alter the optimization problems. The point is that the latter coefficients are translation invariant, and thus we may use Theorem 2.6 to construct a solution of these CN problems from a solution of $NCN(\tilde{b}-\tilde{b}_0,\sigma,\tilde{f} - \tilde{f}_0,\tilde{g} - \tilde{g}_0)$. For example, this trick would allow us to effortlessly incorporate common noise into the flocking models considered in \cite{nourian-cuckersmalemfg1,carmonalacker-probabilisticweakformulation}, which have a linear drift term which is not translation invariant.

\section{Weak solutions and the converse} \label{se:uniqueness}

To make sense of the converse of Theorem \ref{th:main}, we will need the notion of \emph{weak solution} for mean field games with and without common noise, introduced in \cite{carmonadelaruelacker-mfgcommonnoise} and \cite{lacker-meanfieldlimit}, respectively. The meaning of \emph{weak} here is probabilistic: the random measure $\mu$ in the CN solution is no longer required to be $B$-measurable, and the measure $\bar{\mu}$ of the NCN solution is now allowed to be random. Unfortunately, there is an additional subtely in the definitions which necessitates some more notation. Let $\X := \C^m \times L^p([0,T];A) \times \C^d$, and let $(\F^\X_t)_{t \in [0,T]}$ denote the natural filtration on $\X$, where $\F^\X_t$ is the $\sigma$-field generated by the maps
\begin{align*}
\X \ni (w,\alpha,x) &\mapsto (w_s,x_s) \in \R^m \times \R^d, \text{ for } s \le t, \text{ and } \\
\X \ni (w,\alpha,x) &\mapsto \int_0^s1_C(\alpha_s)ds, \text{ for } s \le t, \ C \in \B(A). 
\end{align*}
A measure $\mu \in \P(\X)$ is to represent a joint law of $(W,\alpha,X)$: the independent noise, the control, and the state process. Given $\mu \in \P(\X)$, let $\mu^x := \mu(\C^m \times \L^p([0,T];A) \times \cdot)$ denote the $\C^d$-marginal. Given $q \in \C^d$, we may write $\mu(\cdot + (0,0,q))$ to denote the translation of $\mu$ in the direction of $\C^d$ given by $q$, defined by
\[
\mu(A + (0,0,q)) := \mu\left\{(w,\alpha,x + q) : (w,\alpha,x) \in A\right\},
\]
Note of course that $\mu(\cdot + (0,0,q))^x = \mu^x(\cdot + q)$.
The following two definitions come from \cite{carmonadelaruelacker-mfgcommonnoise} and \cite{lacker-meanfieldlimit}, respectively. The curious reader is referred to these papers for a thorough discussion of these definitions, especially \cite{carmonadelaruelacker-mfgcommonnoise} for the unusual conditional independence of condition (3) and the necessity of considering measures on the larger space $\X$, rather than $\C^d$.

\begin{definition}[Weak common-noise solution] \label{def:wcnsolution}
A \emph{weak solution of $CN(b,\sigma,f,g,b_0,\sigma_0)$} is a tuple $(\Omega,(\F_t)_{t \in [0,T]},P,B,W,\mu,\alpha,X)$, where $(\Omega,(\F_t)_{t \in [0,T]},P)$ is a complete filtered probability space supporting $(B,W,\mu,\alpha,X)$ satisfying the following:
\begin{enumerate}
\item $B$ and $W$ are independent $(\F_t)_{t \in [0,T]}$-Wiener processes of dimension $m_0$ and $m$, respectively, and $X$ is a continuous $d$-dimensional $(\F_t)_{t \in [0,T]}$-adapted process with $P \circ X_0^{-1} = \lambda$.
\item $\mu$ is a random element of $\P(\X)$, and $\mu(C)$ is $\F_t$-measurable for each $C \in \F^\X_t$ and $t \in [0,T]$.
\item $\alpha$ is a $(\F_t)_{t \in [0,T]}$-progressively measurable $A$-valued process satisfying $\E^P\int_0^T|\alpha_t|^pdt < \infty$, and $\sigma(\alpha_s : s \le t)$ is conditionally independent of $\F^{X_0,B,W,\mu}_T$ given $\F^{X_0,B,W,\mu}_t$, for each $t \in [0,T]$, where
\[
\F^{X_0,B,W,\mu}_t := \sigma(X_0,B_s,W_s,\mu(C) : s \le t, \ C \in \F^\X_t).
\]
\item $X_0$, $W$, and $(B,\mu)$ are independent.
\item The state equation \eqref{def:SDE-X} holds.
\item If $(\Omega',(\F'_t)_{t \in [0,T]},P')$ is another filtered probability space supporting $(B',W',\mu',\alpha',X')$ satisfying (1-5) and $P \circ (B,\mu)^{-1} = P' \circ (B',\mu')^{-1}$, then
\[
\E^P\left[\int_0^Tf(t,X_t,\mu^x_t,\alpha_t)dt + g(X_T,\mu^x_T)\right] \ge \E^{P'}\left[\int_0^Tf(t,X'_t,\mu'^x_t,\alpha'_t)dt + g(X'_T,\mu'^x_T)\right].
\]
\item $\mu = P((W,\alpha,X) \in \cdot \ | \ (B,\mu))$ a.s., and $\mu^x_t \in D$ a.s. for each $t \in [0,T]$.
\end{enumerate}
\end{definition}

\begin{definition}[Weak no-common-noise solution] \label{def:wncnsolution}
A \emph{weak solution of $NCN(b,\sigma,f,g)$} is a tuple $(\Omega,(\F_t)_{t \in [0,T]},P,W,\bar{\mu},\alpha,Y)$, where $(\Omega,(\F_t)_{t \in [0,T]},P)$ is a filtered probability space supporting $(W,\bar{\mu},\alpha,Y)$ satisfying the following:
\begin{enumerate}
\item $W$ is a $(\F_t)_{t \in [0,T]}$-Wiener processes of dimension $m$, and $Y$ is a continuous $d$-dimensional $(\F_t)_{t \in [0,T]}$-adapted process with $P \circ Y_0^{-1} = \lambda$.
\item $\bar{\mu}$ is a random element of $\P(\X)$, and $\bar{\mu}(C)$ is $\F_t$-measurable for each $C \in \F^\X_t$ and $t \in [0,T]$.
\item $\alpha$ is an $(\F_t)_{t \in [0,T]}$-progressively measurable $A$-valued process satisfying $\E^P\int_0^T|\alpha_t|^pdt < \infty$, and $\sigma(\alpha_s : s \le t)$ is conditionally independent of $\F^{X_0,W,\bar{\mu}}_T$ given $\F^{X_0,W,\bar{\mu}}_t$, for each $t \in [0,T]$, where
\[
\F^{X_0,W,\bar{\mu}}_t := \sigma(X_0,W_s,\bar{\mu}(C) : s \le t, \ C \in \F^\X_t).
\]
\item $X_0$, $W$, and $\bar{\mu}$ are independent.
\item The state equation \eqref{def:SDE-Y} holds.
\item If $(\Omega',(\F'_t)_{t \in [0,T]},P')$ is another filtered probability space supporting $(W',\bar{\mu}',\alpha',Y')$ satisfying (1-5) and $P \circ \bar{\mu}^{-1} = P' \circ (\bar{\mu}')^{-1}$, then
\[
\E^P\left[\int_0^Tf(t,Y_t,\bar{\mu}^x_t,\alpha_t)dt + g(Y_T,\bar{\mu}^x_T)\right] \ge \E^{P'}\left[\int_0^Tf(t,Y'_t,\bar{\mu}'^x_t,\alpha'_t)dt + g(Y'_T,\bar{\mu}'^x_T)\right].
\]
\item $\bar{\mu}= P((W,\alpha,Y) \in \cdot \ | \ \bar{\mu})$ a.s., and $\bar{\mu}^x_t \in D$ a.s. for each $t \in [0,T]$.
\end{enumerate}
\end{definition}

Note that both of these solution notions correspond to \emph{weak MFG solutions with strict control} in \cite{carmonadelaruelacker-mfgcommonnoise}. If $A$ is replaced by $\P(A)$, then these notions correspond to \emph{weak MFG solutions with weak (relaxed) control} in \cite{carmonadelaruelacker-mfgcommonnoise}. Note that a weak CN solution for which $\mu$ happens to be $B$-measurable automatically provides a (strong) CN solution, replacing $\mu$ with $\mu^x$. Similarly, a weak NCN solution for which $\bar{\mu}$ happens to be deterministic (a.s. constant) automatically yields a (strong) NCN solution. One additional definition will facilitate the statement of the final results:

\begin{definition} \label{qSDE-defn}
Consider two filtered probability spaces $(\Omega^i,(\F^i_t)_{t \in [0,T]},P^i)$ supporting (respectively) an $\R^d$-valued adapted process $(q^i_t)_{t \in [0,T]}$, a Wiener process $(B^i_t)_{t \in [0,T]}$, and a $\P(\X)$-valued random variable $\bar{\mu}$ such that $\bar{\mu}^x_t \in D$ a.s. for each $t \in [0,T]$ and such that $\bar{\mu}(C)$ is $\F^i_t$-measurable for each $C \in \F^\X_t$ and each $t \in [0,T]$. Suppose also that 
\begin{align}
dq^i_t = b_0(t,\bar{\mu}^{ix}_t(\cdot - q^x_t))dt + \sigma_0(t,\bar{\mu}^{ix}_t(\cdot - q^i_t))dB^i_t, \ q^i_0 = 0. \label{def:qSDE-random}
\end{align}
Suppose that for any such a pair of spaces satisfying $P^1 \circ (B^1,\bar{\mu}^1)^{-1} = P^2 \circ (B^2,\bar{\mu}^2)^{-1}$ we also have $P^1 \circ (B^1,\bar{\mu}^1,q^1)^{-1} = P^2 \circ (B^2,\bar{\mu}^2,q^2)^{-1}$. Then we say \emph{weak uniqueness holds for the SDE \eqref{def:qSDE-random}}.
\end{definition}

As argued at the beginning of Section \ref{se:application}, weak uniqueness holds for the SDE \eqref{def:qSDE-random} when $b_0$ and $\sigma_0$ are $\W_p$-Lipschitz in the measure argument uniformly in $t$, for some $p \ge 1$.

\begin{theorem} \label{th:mainconverse}
Suppose $(\Omega,(\F_t)_{t \in [0,T]},P,B,W,\mu,\alpha,X)$ is a weak solution of $CN(b,\sigma,f,g,b_0,\sigma_0)$. Suppose that weak uniqueness holds for the SDE \eqref{def:qSDE-random}. Define a process $(q_t)_{t \in [0,T]}$ on $\Omega$ by
\[
q_t = \int_0^tb_0(s,\mu_s)ds + \int_0^t\sigma_0(s,\mu_s)dB_s.
\] 
If $Y := X - q$ and $\bar{\mu} := \mu(\cdot + (0,0,q))$, then $(\Omega,(\F_t)_{t \in [0,T]},P,W,\bar{\mu},\alpha,Y)$ is a weak solution of $NCN(b,\sigma,f,g)$. 
\end{theorem}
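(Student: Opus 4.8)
The plan is to run the construction of Theorem~\ref{th:main} in reverse, the new difficulty being that the weak-solution framework requires me to track conditional-independence (``compatibility'') requirements rather than mere measurability. Writing $\mu^x$ for the $\C^d$-marginal of $\mu$, the process $q_t = \int_0^t b_0(s,\mu^x_s)\,ds + \int_0^t \sigma_0(s,\mu^x_s)\,dB_s$ is $(\F_t)$-adapted because $s\mapsto\mu^x_s$ is, by condition (2) of Definition~\ref{def:wcnsolution}. The first thing I would record is that, since $\bar\mu^x_t = \mu^x_t(\cdot+q_t)$ yields $\bar\mu^x_t(\cdot-q_t)=\mu^x_t$, the process $q$ is in fact a solution of the SDE~\eqref{def:qSDE-random} driven by $(B,\bar\mu)$. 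This is precisely what makes the weak-uniqueness hypothesis of Definition~\ref{qSDE-defn} available when I reach the optimality step.

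Next I would dispatch the structural conditions (1), (2), (4), (5) of Definition~\ref{def:wncnsolution}, which are essentially mechanical. Condition (5), the state equation for $Y=X-q$, follows exactly as in Theorem~\ref{th:main}: subtracting $dq_t$ from $dX_t$ cancels the $b_0$ and $\sigma_0$ terms, while translation invariance rewrites $b(t,X_t,\mu^x_t,\alpha_t)=b(t,Y_t+q_t,\mu^x_t,\alpha_t)=b(t,Y_t,\bar\mu^x_t,\alpha_t)$, and similarly for $\sigma$. Condition (4) holds because $\bar\mu$ is a function of $(B,\mu)$, which is independent of $(X_0,W)$; (1) is immediate with $Y_0=X_0$; and (2) follows from adaptedness of $q$ together with the observation that translating a set in $\F^\X_t$ by the path $(q_s)_{s\le t}$ keeps it inside $\F^\X_t$, so that $\bar\mu(C)=\mu(C+(0,0,q))$ inherits $\F_t$-measurability from condition (2) of Definition~\ref{def:wcnsolution}.

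The substantive point is the fixed-point condition (7) and the compatibility condition (3), which I would treat together. Since $q$ is $\sigma(B,\mu)$-measurable, a direct computation gives $\bar\mu = P((W,\alpha,Y)\in\cdot \mid (B,\mu))$; as the right-hand side is $\sigma(\bar\mu)$-measurable and $\sigma(\bar\mu)\subseteq\sigma(B,\mu)$, the tower property collapses this to $\bar\mu = P((W,\alpha,Y)\in\cdot\mid\bar\mu)$, which is (7) (with $\bar\mu^x_t\in D$ by translation invariance of $D$), and simultaneously yields the key conditional independence $(W,\alpha,Y)\perp(B,\mu)\mid\bar\mu$. Combined with the inclusion $\F^{X_0,W,\bar\mu}_t\subseteq\F^{X_0,B,W,\mu}_t$ and the CN compatibility $\sigma(\alpha_s:s\le t)\perp\F^{X_0,B,W,\mu}_T\mid\F^{X_0,B,W,\mu}_t$, I would transfer compatibility down from the larger filtration $\F^{X_0,B,W,\mu}$ to $\F^{X_0,W,\bar\mu}$. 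I expect this transfer to be the \emph{main obstacle}: compatibility with respect to a larger filtration does not in general descend to a sub-filtration, so the argument must genuinely exploit the fact that the common noise $B$ enters $\bar\mu$ only through the adapted process $q$, encoded in the conditional independence above. This bookkeeping, rather than any single estimate, is the delicate heart of condition (3).

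Finally, for the optimality condition (6) I would mirror the optimality argument of Theorem~\ref{th:main}. Given any competitor NCN tuple $(\Omega',(\F'_t),P',W',\bar\mu',\alpha',Y')$ satisfying (1)--(5) with $P\circ\bar\mu^{-1}=P'\circ(\bar\mu')^{-1}$, I would enlarge $\Omega'$ to carry an independent Wiener process $B'$, produce a process $q'$ solving~\eqref{def:qSDE-random} driven by $(B',\bar\mu')$, and set $X':=Y'+q'$ and $\mu':=\bar\mu'(\cdot+(0,0,-q'))$. The weak-uniqueness hypothesis then forces $P'\circ(B',\bar\mu',q')^{-1}=P\circ(B,\bar\mu,q)^{-1}$, hence $P'\circ(B',\mu')^{-1}=P\circ(B,\mu)^{-1}$, and translation invariance (applied as above) shows the lifted tuple is a CN competitor satisfying (1)--(5) of Definition~\ref{def:wcnsolution}. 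Invoking the CN optimality of the original solution and converting both objectives back through translation invariance yields the desired NCN inequality. The two points requiring care here are the weak existence of $q'$ on the enlarged space—obtained by transferring the regular conditional law of $q$ given $(B,\bar\mu)$, using $\bar\mu'\stackrel{d}{=}\bar\mu$—and the verification of the CN compatibility conditions (3)--(4) for the lifted competitor, both handled by the same conditional-independence bookkeeping flagged above.
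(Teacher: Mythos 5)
Your proposal follows essentially the same route as the paper's proof: establish the fixed point $\bar{\mu} = P((W,\alpha,Y)\in\cdot\mid\bar{\mu})$ by noting $q$ is $(B,\mu)$-measurable and conditioning down via the tower property, obtain the state equation for $Y=X-q$ from translation invariance, observe that $q$ solves \eqref{def:qSDE-random} driven by $(B,\bar{\mu})$, and prove optimality by lifting any NCN competitor to a CN competitor through an enlargement carrying $B'$, solving for $q'$, defining $X'=Y'+q'$ and $\mu'=\bar{\mu}'(\cdot-(0,0,q'))$, and invoking weak uniqueness to match the laws of $(B,\mu)$ and $(B',\mu')$. The two points you flag as delicate---transferring the compatibility condition (3) to the smaller filtration, and the existence of $q'$ on the competitor's space---are passed over in silence in the paper's own proof, so your treatment is, if anything, more explicit than the original.
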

\begin{proof}
We simply invert the construction of Theorem \ref{th:main}. Since $\mu = P((W,\alpha,X) \in \cdot \ | \ B,\mu)$ and $q$ is $(B,\mu)$-measurable, 
\begin{align*}
\bar{\mu} &= P((W,\alpha,X) \in \cdot + (0,0,q) \ | \ B,\mu) = P((W,\alpha,Y) \in \cdot \ | \ B,\mu).
\end{align*}
Clearly $\bar{\mu}$ generates a smaller $\sigma$-field than $(B,\mu)$, and so we may condition both sides of this equation on $\bar{\mu}$ to get
\begin{align*}
\bar{\mu} &= P((W,\alpha,Y) \in \cdot \ | \ \bar{\mu}).
\end{align*}
Translation invariance implies $b(t,X_t,\mu^x_t,\alpha_t) = b(t,Y_t,\bar{\mu}^x_t,\alpha_t)$, and similarly for $\sigma$ and $f$. Thus
\begin{align*}
dY_t &= dX_t - dq_t = b(t,X_t,\mu^x_t,\alpha_t)dt + \sigma(t,X_t,\mu^x_t,\alpha_t)dW_t \\
	&= b(t,Y_t,\bar{\mu}^x_t,\alpha_t)dt + \sigma(t,Y_t,\bar{\mu}^x_t,\alpha_t)dW_t.
\end{align*}
Note that $q$ happens to verify the SDE
\[
dq_t = b_0(t,\bar{\mu}^x_t(\cdot - q_t))dt + \sigma_0(t,\bar{\mu}^x_t(\cdot - q_t))dB_t, \ q_0=0.
\]

Now suppose $(\Omega',(\F'_t)_{t \in [0,T]},P')$ is another filtered probability space supporting $(W',\bar{\mu}',\alpha',Y')$ satisfying (1-5) of Definition \ref{def:wncnsolution} and $P' \circ (\bar{\mu}')^{-1} = P \circ \bar{\mu}^{-1}$. By enlarging the space $\Omega'$, we may assume without loss of generality that it supports a $(\F'_t)_{t \in [0,T]}$-Wiener process $B'$ of dimension $m_0$ such that $Y'_0$, $W'$ and $(B',\bar{\mu}')$ are independent and such that $P' \circ (B',\bar{\mu}')^{-1} = P \circ (B,\bar{\mu})^{-1}$. Solve the SDE
\[
dq'_t = b_0(t,\bar{\mu}'^x_t(\cdot - q'_t))dt + \sigma_0(t,\bar{\mu}'^x_t(\cdot - q'_t))dB'_t, \ q'_0=0.
\]
Define $\mu' := \bar{\mu}'(\cdot - (0,0,q'))$. 
Since $P' \circ (B',\bar{\mu}')^{-1} = P \circ (B,\bar{\mu})^{-1}$, it follows from weak uniqueness of the SDE \eqref{def:qSDE-random} that $P' \circ (B',\bar{\mu}',q')^{-1} = P \circ (B,\bar{\mu},q)^{-1}$, which in turn implies $P' \circ (B',\mu')^{-1} = P \circ (B,\mu)^{-1}$. Using translation invariance of $b$ and $\sigma$, we check that $X' := Y' + q'$ verifies the state equation
\[
dX'_t = [b_0(t,\nu^x_t) + b(t,X'_t,\mu'^x_t,\alpha'_t)]dt + \sigma(t,X'_t,\mu'^x_t,\alpha'_t)dW'_t + \sigma_0(t,\mu'^x_t)dB'_t, \ X'_0=Y'_0.
\]
Hence, $(\Omega',(\F'_t)_{t \in [0,T]},P',B',W',\mu',\alpha',X')$ satisfies (1-5) of Definition \ref{def:wcnsolution}, along with $P' \circ (B',\mu')^{-1} = P \circ (B,\mu)^{-1}$. Using translation invariance of $f$ along with the optimality property (6) of Definition \ref{def:wcnsolution}, we get
\begin{align*}
0 &\le \E^{P}\left[\int_0^Tf(t,X_t,\mu^x_t,\alpha_t)dt + g(X_T,\mu^x_T)\right] - \E^{P'}\left[\int_0^Tf(t,X'_t,\mu'^x_t,\alpha'_t)dt + g(X'_T,\mu'^x_T)\right] \\
	&= \E^{P}\left[\int_0^Tf(t,Y_t,\bar{\mu}^x_t,\alpha_t)dt + g(Y_T,\bar{\mu}^x_T)\right] - \E^{P'}\left[\int_0^Tf(t,Y'_t,\bar{\mu}'^x_t,\alpha'_t)dt + g(Y'_T,\bar{\mu}'^x_T)\right].
\end{align*}
This shows that $(\Omega,(\F_t)_{t \in [0,T]},P,W,\bar{\mu},\alpha,Y)$ is a weak NCN solution.
\end{proof}

A corollary of Theorem \ref{th:mainconverse} lets us relate uniqueness statements for CN solutions and NCN solutions.
We say \emph{uniqueness in law holds for the weak CN solution} if any two weak CN solutions $(\Omega^i,(\F^i_t)_{t \in [0,T]},P^i,B^i,W^i,\mu^i,\alpha^i,X^i)$ induce the same joint law $P^i \circ (B^i,\mu^i)^{-1}$. Note that weak uniqueness along with the fixed point condition (7) implies that also $P^i \circ (B^i,W^i,\mu^i,\alpha^i,X^i)^{-1}$ is the same for each $i=1,2$. Analogously, we say \emph{uniqueness in law holds for the weak NCN solution} if any two weak NCN solutions $(\Omega^i,(\F^i_t)_{t \in [0,T]},P^i,W^i,\bar{\mu}^i,\alpha^i,Y^i)$ induce the same law $P^i \circ (\bar{\mu}^i)^{-1}$.

\begin{corollary} \label{co:uniqueness}
Suppose uniqueness in law holds for weak NCN solutions, and suppose the unique weak NCN solution is in fact a strong NCN solution. Suppose weak uniqueness holds for the SDE \ref{def:qSDE-random}, in the sense of Definition \ref{qSDE-defn}, and suppose that for each (deterministic) $\bar{\mu} \in \P(\C^d)$ that pathwise uniqueness holds for the corresponding SDE \eqref{def:qSDE}. Then uniqueness in law holds for weak CN solutions, and the unique weak CN solution is in fact a strong CN solution.
\end{corollary}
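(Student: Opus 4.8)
The strategy is to run the construction of Theorem~\ref{th:mainconverse} on an arbitrary weak CN solution, use the two NCN hypotheses to show the resulting $\bar\mu$ is a single fixed deterministic measure, and then apply the Yamada--Watanabe theorem to the now deterministic-coefficient SDE \eqref{def:qSDE} in order to recover both $B$-measurability of $\mu$ and uniqueness of its law.

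In detail, let $(\Omega,(\F_t)_{t\in[0,T]},P,B,W,\mu,\alpha,X)$ be any weak CN solution, and let $q$ be the process defined in Theorem~\ref{th:mainconverse}. Since weak uniqueness for \eqref{def:qSDE-random} is assumed, Theorem~\ref{th:mainconverse} applies and produces a weak NCN solution with $\bar\mu = \mu(\cdot + (0,0,q))$; its proof also records that $q$ solves $dq_t = b_0(t,\bar\mu^x_t(\cdot - q_t))\,dt + \sigma_0(t,\bar\mu^x_t(\cdot - q_t))\,dB_t$, $q_0 = 0$. By hypothesis the (law-)unique weak NCN solution is strong, i.e.\ (see the discussion after Definition~\ref{def:wncnsolution}) its random measure equals almost surely a fixed deterministic $\bar\mu_0 \in \P(\X)$. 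Uniqueness in law for weak NCN solutions then forces $P\circ\bar\mu^{-1} = \delta_{\bar\mu_0}$ for \emph{every} weak NCN solution, hence $\bar\mu = \bar\mu_0$ almost surely, with $\bar\mu_0$ the same regardless of which weak CN solution we started from. Consequently the equation for $q$ is exactly \eqref{def:qSDE} with the deterministic data $\bar\mu_0^x \in \P(\C^d)$, and $\mu = \bar\mu_0(\cdot - (0,0,q))$ a.s.

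Next I would invoke Yamada--Watanabe for this deterministic SDE: $q$ is a weak solution and pathwise uniqueness is assumed (hypothesis (4)), so the law of $(B,q)$ is uniquely determined and $q$ is adapted to the $P$-completion of the filtration generated by $B$. In particular $q$, and therefore $\mu = \bar\mu_0(\cdot-(0,0,q))$ (a measurable image of $q$, since $\bar\mu_0$ is constant), is $B$-measurable. By the discussion following Definition~\ref{def:wncnsolution}, a weak CN solution whose $\mu$ is $B$-measurable yields a strong CN solution upon replacing $\mu$ by $\mu^x$; this gives the strongness half of the conclusion. For uniqueness in law, note that the law of $(B,q)$ is the same across all weak CN solutions—it is the unique law of the deterministic SDE, whose coefficient data $\bar\mu_0$ was pinned down independently of the particular solution—and $\mu = \bar\mu_0(\cdot-(0,0,q))$ is a fixed measurable function of $q$; hence $P\circ(B,\mu)^{-1}$ is identical for every weak CN solution, which is precisely uniqueness in law for weak CN solutions.

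The crux, and the step I expect to require the most care, is the interlocking of the two uniqueness notions in the second paragraph: one must verify that ``uniqueness in law plus strongness'' on the NCN side genuinely collapses $\bar\mu$ to a single deterministic $\bar\mu_0$ common to all CN solutions, since it is only this determinism that converts the $q$-equation into the fixed-coefficient SDE \eqref{def:qSDE} to which hypothesis (4) applies. The remaining subtlety is the standard but essential use of Yamada--Watanabe to extract \emph{both} the $B$-measurability of $q$ and the uniqueness of the law of $(B,q)$ from pathwise uniqueness together with the already-available weak solution.
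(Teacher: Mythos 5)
Your proof is correct and follows essentially the same route as the paper's: apply Theorem \ref{th:mainconverse} to pass to the NCN problem, use the NCN uniqueness-plus-strongness hypotheses to collapse $\bar{\mu}$ to a single deterministic measure common to all weak CN solutions, and then use pathwise uniqueness for the resulting deterministic-coefficient SDE \eqref{def:qSDE} (i.e.\ Yamada--Watanabe) to obtain $B$-measurability of $q$, hence of $\mu$, together with uniqueness of the law of $(B,\mu)$. The only cosmetic differences are that the paper runs the argument with two weak CN solutions side by side and leaves the Yamada--Watanabe step implicit, whereas you treat one arbitrary solution and invoke it explicitly.
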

\begin{proof}
Suppose that $(\Omega^i,\F^i_t,P^i,B^i,W^i,\mu^i,\alpha^i,X^i)$, $i=1,2$, are two weak CN solutions. Define
\begin{align*}
q^i_t &:= \int_0^tb_0(s,\mu^{i,x}_s)ds + \int_0^t\sigma_0(s,\mu^{i,x}_s)dB^i_s, \text{ where } \mu^{i,x} := (\mu^i)^x, \\
Y^i &:= X^i - q^i, \quad \bar{\mu}^i := \mu^i(\cdot + (0,0,q^i)).
\end{align*}
By Theorem \ref{th:mainconverse}, $(\Omega^i,\F^i_t,P^i,W^i,\bar{\mu}^i,\alpha^i,Y^i)$ is a weak NCN solution for each $i=1,2$. By assumption, $\bar{\mu}^1$ and $\bar{\mu}^2$ must be equal and almost surely constant. Define $\bar{\mu} \in \P(\X)$ to be their common value. Then, on $\Omega^i$, $(q^i_t)_{t \in [0,T]}$ solves the SDE
\[
dq^i_t = b_0(t,\bar{\mu}^x_t(\cdot - q^i_t))dt + \sigma (t,\bar{\mu}^x_t(\cdot - q^i_t))dB^i_t, \ q^i_0 = 0,
\]
and since pathwise uniqueness holds we conclude that $q^i$ is $B^i$-measurable and that $P^1 \circ (B^1,q^1)^{-1} = P^2 \circ (B^2,q^2)^{-1}$. Since $\mu^i = \bar{\mu}(\cdot - (0,0,q^i))$, we conclude that $\mu^i$ is $B^i$-measurable and that $P^1 \circ (B^1,\mu^1)^{-1} = P^2 \circ (B^2,\mu^2)^{-1}$.
\end{proof}

\begin{remark}
When there is no common noise, i.e. $\sigma_0 \equiv 0$, we still have some flexibility with the drift term $b_0$. Analogs of Theorem \ref{th:main} and Corollary \ref{co:uniqueness} allow us to relate solutions of $NCN(b,\sigma,f,g)$ with solutions of $NCN(b+b_0,\sigma,f,g)$. In particular, existence and uniqueness are equivalent for the two problems, whether we use strong or weak solutions. Many uniqueness results \cite{lasrylionsmfg,ahuja-mfgwellposedness,carmonalacker-probabilisticweakformulation,carmonadelaruelacker-mfgcommonnoise} rely not only on the Lasry-Lions monotonicity condition but also crucially on the independence of the coefficients $b$ and $\sigma$ of the mean field term. Corollary \ref{co:uniqueness} leads to new uniqueness results for (translation invariant) MFGs in which a mean field term enters into the drift.
\end{remark}

\bibliographystyle{amsplain}
\bibliography{translationMFG-bib}

\providecommand{\bysame}{\leavevmode\hbox to3em{\hrulefill}\thinspace}
\providecommand{\MR}{\relax\ifhmode\unskip\space\fi MR }
% \MRhref is called by the amsart/book/proc definition of \MR.
\providecommand{\MRhref}[2]{%
  \href{http://www.ams.org/mathscinet-getitem?mr=#1}{#2}
}
\providecommand{\href}[2]{#2}
\begin{thebibliography}{10}

\bibitem{ahuja-mfgwellposedness}
S.~Ahuja, \emph{Wellposedness of mean field games with common noise under a
  weak monotonicity condition}, arXiv preprint arXiv:1406.7028 (2014).

\bibitem{bensoussan-mfgbook}
A.~Bensoussan, J.~Frehse, and P.~Yam, \emph{Mean field games and mean field
  type control theory}, Springer, 2013.

\bibitem{bensoussanfrehseyam-master}
\bysame, \emph{The master equation in mean field theory}, arXiv preprint
  arXiv:1404.4150 (2014).

\bibitem{cardaliaguet-mfgnotes}
P.~Cardaliaguet, \emph{Notes on mean field games}.

\bibitem{cardaliaguet-mfgdegenerate}
P.~Cardaliaguet, J.~Graber, A.~Porretta, and D.~Tonon, \emph{Second order mean
  field games with degenerate diffusion and local coupling}, arXiv preprint
  arXiv:1407.7024 (2014).

\bibitem{carmonadelarue-mfg}
R.~Carmona and F.~Delarue, \emph{Probabilistic analysis of mean field games},
  SIAM Journal of Control and Optimization (2013).

\bibitem{carmonadelarue-master}
\bysame, \emph{The master equation for large population equilibriums}, arXiv
  preprint arXiv:1404.4694 (2014).

\bibitem{carmonadelaruelacker-mfgcommonnoise}
R.~Carmona, F.~Delarue, and D.~Lacker, \emph{Mean field games with common
  noise}, arXiv preprint arXiv:1407.6181 (2014).

\bibitem{carmonafouque-systemicrisk}
R.~Carmona, J.P. Fouque, and L.H. Sun, \emph{Mean field games and systemic
  risk}, arXiv preprint arXiv:1308.2172 (2013).

\bibitem{carmonalacker-probabilisticweakformulation}
R.~Carmona and D.~Lacker, \emph{A probabilistic weak formulation of mean field
  games and applications}, Annals of Applied Probability (2014), appeared
  online.

\bibitem{fischer-mfgconnection}
M.~Fischer, \emph{On the connection between symmetric $ n $-player games and
  mean field games}, arXiv preprint arXiv:1405.1345 (2014).

\bibitem{gomes-mfgsuperquadratic}
D.A. Gomes, E.~Pimentel, and H.~S{\'a}nchez-Morgado, \emph{Time dependent
  mean-field games in the superquadratic case}, arXiv preprint arXiv:1311.6684
  (2013).

\bibitem{gomes-mfgsubquadratic}
\bysame, \emph{Time-dependent mean-field games in the subquadratic case},
  Communications in Partial Differential Equations (2014), no.~just-accepted.

\bibitem{gomessaude-mfgsurvey}
D.A. Gomes and J.~Sa{\'u}de, \emph{Mean field games models-a brief survey},
  Dynamic Games and Applications (2013), 1--45.

\bibitem{gueantlasrylionsmfg}
O.~Gu\'{e}ant, J.M. Lasry, and P.L. Lions, \emph{Mean field games and
  applications}, Paris-Princeton Lectures on Mathematical Finance 2010, Lecture
  Notes in Mathematics, vol. 2003, Springer Berlin / Heidelberg, 2011,
  pp.~205--266.

\bibitem{huangmfg3}
M.~Huang, P.~Caines, and R.~Malham{\'e}, \emph{Large-population cost-coupled
  {LQG} problems with nonuniform agents: {I}ndividual-mass behavior and
  decentralized $\varepsilon$-{N}ash equilibria}, Automatic Control, IEEE
  Transactions on \textbf{52} (2007), no.~9, 1560--1571.

\bibitem{huangmfg1}
M.~Huang, R.~Malham\'{e}, and P.~Caines, \emph{Large population stochastic
  dynamic games: closed-loop {M}c{K}ean-{V}lasov systems and the {N}ash
  certainty equivalence principle}, Communications in Information and Systems
  \textbf{6} (2006), no.~3, 221--252.

\bibitem{elkaroui-compactification}
N.~El Karoui, D.H. Nguyen, and M.~Jeanblanc-Picqu{\'e}, \emph{Compactification
  methods in the control of degenerate diffusions: existence of an optimal
  control}, Stochastics \textbf{20} (1987), no.~3, 169--219.

\bibitem{elkarouitan-capacities}
N.~El Karoui and X.~Tan, \emph{Capacities, measurable selection and dynamic
  programming part {II}: {A}pplication in stochastic control problems}, arXiv
  preprint arXiv:1310.3364 (2013).

\bibitem{lacker-meanfieldlimit}
D.~Lacker, \emph{A general characterization of the mean field limit for
  stochastic differential games}, arXiv preprint arXiv:1408.2708 (2014).

\bibitem{lacker-mfgviacontrolledmtgproblems}
\bysame, \emph{Mean field games via controlled martingale problems: existence
  of {M}arkovian equilibria}, arXiv preprint arXiv:1404.2642v2 (2014).

\bibitem{lasrylionsmfg}
J.M. Lasry and P.L. Lions, \emph{Mean field games}, Japanese Journal of
  Mathematics \textbf{2} (2007), 229--260.

\bibitem{nourian-cuckersmalemfg1}
M.~Nourian, P.~Caines, and R.~Malham{\'e}, \emph{Mean field analysis of
  controlled cucker-smale type flocking: Linear analysis and perturbation
  equations}, Proceedings of the 18th IFAC World Congress, Milan, August 2011,
  2011, pp.~4471--4476.

\end{thebibliography}

\end{document}